\newtheorem{thm}{Theorem}[section]
\newtheorem{lem}[thm]{Lemma}
\newtheorem{prop}[thm]{Proposition}
\newtheorem{exm}[thm]{Example}
\theoremstyle{definition}
\newtheorem{defn}[thm]{Definition}
\theoremstyle{remark}
\newtheorem{rem}[thm]{Remark}
\numberwithin{equation}{section}
\begin{document}

\title{On stable Baire classes}
\author{Olena Karlova}\email{maslenizza.ua@gmail.com}
\author{Volodymyr Mykhaylyuk}\email{vmykhaylyuk@ukr.net}
\address{Yurii Fedkovych Chernivtsi National University, Ukraine}

\subjclass{Primary 54C08, 54H05; Secondary 26A21}

\keywords{stable convergence, stable Baire class, adhesive space}

\date{}

\begin{abstract}
 We introduce and study adhesive spaces. Using this concept we obtain  a characterization of stable Baire maps $f:X\to Y$ of the class $\alpha$ for wide classes of topological spaces. In particular, we prove that for a topological space  $X$ and a contractible space  $Y$ a map $f:X\to Y$ belongs to the $n$'th stable Baire class if and only if there exist a sequence  $(f_k)_{k=1}^\infty$ of continuous maps  $f_k:X\to Y$ and a sequence $(F_k)_{k=1}^\infty$ of functionally ambiguous sets  of the $n$'th class in $X$ such that  $f|_{F_k}=f_k|_{F_k}$ for every $k$. Moreover, we show that every monotone function  $f:\mathbb R\to \mathbb R$ is of the $\alpha$'th stable Baire class if and only if it belongs to the first stable Baire class.
\end{abstract}

\maketitle

\section{Introduction, terminology and notations}

We say that a sequence $(f_n)_{n=1}^\infty$ of maps $f_n:X\to Y$ between topological spaces $X$ and $Y$ is  {\it stably convergent to a map $f:X\to Y$ on $X$}, if for every $x\in X$ there exists $k\in\mathbb N$ such that $f_n(x)=f(x)$ for all  $n\ge k$. A map $f:X\to Y$ belongs to {\it the first stable Baire class}, if there exists a sequence of continuous maps between $X$ and $Y$ which is stably convergent to $f$ on $X$.

Real-valued functions of the first stable Baire class on a topological space $X$ naturally appear both as an interesting subclass of all differences of semi-continuous functions on $X$ \cite{ChRos,HOR} and in problems on the Baire classification of integrals depending on a parameter \cite{BKMM} as well as in problems concerning a composition of Baire functions \cite{Karlova:Mykhaylyuk:Comp}.

Real-valued functions of higher stable Baire classes were introduced and studied by \'{A}.~Cs\'{a}sz\'{a}r and M.~Laczkovich in \cite{CsLacz:1,CsLacz:2}. A characterization of maps of the first Baire class defined on a perfectly paracompact hereditarily Baire space with the Preiss-Simon property and with values in  a path-connected space with special extension properties was established by T.~Banakh and B.~Bokalo in \cite{BaBo}.

This paper is devoted to obtain a characterization of stable Baire maps for  wide classes of topological spaces and any ordinal $\alpha\in[1,\omega_1)$.
To do this we introduce a class of adhesive spaces and study their properties in Section~\ref{sec:AdhesiveSpaces}. In Section~\ref{sec:Stable} we give a characterization of stable Baire maps defined on a topological space and with values in adhesive spaces (see Theorem~\ref{th:char_B1st}). Finally, in Section~\ref{sec:monotone} we apply this characterization to classify monotone functions within stable Baire classes (see Theorem~\ref{th:mon_stable}).

Let  us give some notations and recall several definitions. For  topological spaces $X$ and $Y$ by ${\rm C}(X,Y)$ we denote the collection of all continuous maps between $X$ and $Y$.

If $A\subseteq Y^X$, then the symbol $\overline{A}^{\,\,{\rm st}}$ stands for the collection of all stable   limits of sequences of maps from~$A$.
We put
$$
{\rm B}_0^{{\rm st}}(X,Y)={\rm C}(X,Y)
$$
and for each ordinal $\alpha\in (0,\omega_1)$ let ${\rm B}_\alpha^{\rm st}(X,Y)$ be the family of all maps of {\it the $\alpha$'th stable Baire class} which is defined by the formula
$$
{\rm B}_\alpha^{\rm st}(X,Y)=\overline{\bigcup\limits_{\beta<\alpha}{\rm B}_\beta^{\rm st}(X,Y)}^{\,\,{\rm st}}.
$$

Recall that a set $A\subseteq X$ is {\it functionally closed}, if there exists a continuous function $f:X\to [0,1]$ with $A=f^{-1}(0)$. If the complement of $A$ is functionally closed, then $A$ is called {\it functionally open}.

Let $\mathcal M_0(X)$ be the family of all functionally closed subsets of  $X$ and let $\mathcal A_0(X)$ be the family of all functionally open subsets of  $X$. For every  $\alpha\in [1,\omega_1)$ we put
\begin{gather*}
   \mathcal M_{\alpha}(X)=\Bigl\{\bigcap\limits_{n=1}^\infty A_n: A_n\in\bigcup\limits_{\beta<\alpha}\mathcal A_{\beta}(X),\,\, n=1,2,\dots\Bigr\}\,\,\,\mbox{and}\\
   \mathcal A_{\alpha}(X)=\Bigl\{\bigcup\limits_{n=1}^\infty A_n: A_n\in\bigcup\limits_{\beta<\alpha}\mathcal M_{\beta}(X),\,\, n=1,2,\dots\Bigr\}.
 \end{gather*}
Elements from the class  $\mathcal M_\alpha(X)$ belong to {\it the $\alpha$'th functionally multiplicative class} and elements from  $\mathcal A_\alpha(X)$ belong to {\it the $\alpha$'th functionally additive class} in  $X$. We say that a set is {\it functionally ambiguous of the $\alpha$'th class}, if it belongs to $\mathcal M_\alpha(X)$ and $\mathcal A_\alpha(X)$ simultaneously.

A topological space $X$ is called {\it contractible}, if there exist a point $x_0\in X$ and a continuous map $\gamma:X\times[0,1]\to X$ such that $\gamma(x,0)=x$ and $\gamma(x,1)=x_0$ for all $x\in X$.
 A space $Y$ is {\it an extensor for $X$}, if for any closed set $F\subseteq X$ each continuous map $f:F\to Y$  can be extended to a continuous map $g:X\to Y$.

A map $f:X\to Y$ is called {\it piecewise continuous}, if there exists a cover $(F_n:n\in\mathbb N)$ of  $X$ by closed sets such that each
restriction $f|_{F_n}$ is continuous.

We denote by symbols $C(f)$ and $D(f)$ the sets of all points of continuity and discontinuity of a map $f:X\to Y$, respectively.

\section{Adhesive spaces}\label{sec:AdhesiveSpaces}


\begin{defn}\label{Def:Adhesive}
  We say that a topological space $Y$ is  {\it an adhesive for $X$} (we denote this fact by  $Y\in {\rm Ad}(X)$), if for any two disjoint functionally closed  sets   $A$ and  $B$ in $X$ and any continuous maps $f,g:X\to Y$ there exists a continuous map $h:X\to Y$ such that $h|_A=f|_A$ and $h|_B=g|_B$. A space $Y$ is said to be  {\it an absolute adhesive} for a class $\mathcal C$ of topological spaces and is denoted by $Y\in {\rm AAd(\mathcal C)}$, if $Y\in{\rm Ad}(X)$ for any  $X\in\mathcal C$.
\end{defn}

If the case when $Y$ is an adhesive for any space  $X$, then we will write  $Y\in {\rm AAd}$.

\begin{rem}
\begin{enumerate}
  \item Every extensor is an adhesive.

  \item Let $\pi(X,Y)$ be the set of all homotopy classes of continuous maps between $X$ and $Y$. If $\pi(X,Y)={\rm C}(X,Y)$, then $Y$ is an adhesive for   $X$.

  \item Example~\ref{exm:cantorAD}(a) shows that the class of adhesive spaces is strictly wider than the class of extensors. Example~\ref{exm:cantorAD}(b) contains an adhesive $Y$ for $X$ such that $\pi(X,Y)\ne{\rm C}(X,Y)$.
\end{enumerate}
\end{rem}

\begin{defn}\label{Def:StarAdhesive}
Let $E$ be a subspace of a topological space $Y$. The pair  $(Y,E)$ is called   {\it \mbox{a $*$-adhesive} for  $X$} (and is denoted by $(Y,E)\in{\rm Ad}^*(X)$), if there exists a point   $y^*\in E$ such that for any continuous map $f:X\to Y$ and any two disjoint functionally closed sets   $A$ and $B$ in  $X$ with $f(A)\subseteq E$ there exists a continuous map  $h:X\to Y$ such that $h|_A=f|_A$ and $h|_B=y^*$.
\end{defn}

\begin{rem} A pair $(Y,E)$ is  a $*$-adhesive for $X$, if
  \begin{enumerate}
    \item $E$ is a subspace of  $Y\in {\rm Ad}(X)$;

    \item $(Y,E)\in{\rm AE}(X)$ (i.e., each continuous map $f:F\to E$ has a continuous extension \mbox{$g:X\to Y$});

    \item  $E\in {\rm Ad}(X)$ and $E$ is a retract of  $Y$.
  \end{enumerate}
\end{rem}

\begin{defn}
  A topological space  $Y$ is said to be {\it a $\sigma$-adhesive for $X$}, if there exists a cover $(Y_n:n\in\mathbb N)$ of $Y$ by functionally closed subspaces  $Y_n$ such that $(Y,Y_n)\in {\rm Ad}^*(X)$ for every $n$.
\end{defn}

\begin{defn}
 A topological space $X$ is {\it low-dimensional}, if each point of $X$ has a base of open neighborhoods with discrete   (probably, empty) boundaries.
\end{defn}

It is clear that for a regular low-dimensional space we have ${\rm ind}X\le 1$.

The following result gives examples of adhesive spaces.

\begin{thm}\label{Thm:AbsAdh}
Let $Y$ be a topological space. Then
  \begin{enumerate}
    \item\label{Thm:AbsAdh:it:1} $Y$ is an absolute adhesive for the class of all strongly zero-dimensional spaces;

    \item\label{Thm:AbsAdh:it:2}  $Y$ is an absolute adhesive for the class of all low-dimensional compact Hausdorff spaces, if  $Y$ is path-connected;

    \item\label{Thm:AbsAdh:it:3}  $Y\in {\rm AAd}$ if and only if  $Y$ is contractible.
  \end{enumerate}
\end{thm}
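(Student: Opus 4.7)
For assertion (1), strong zero-dimensionality of $X$ furnishes, for any disjoint functionally closed $A,B\subseteq X$, a functionally clopen $C$ with $A\subseteq C\subseteq X\setminus B$; setting $h=f$ on $C$ and $h=g$ on $X\setminus C$ produces a continuous $h$ with $h|_A=f|_A$ and $h|_B=g|_B$, because $\{C,X\setminus C\}$ is a clopen partition of $X$.

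For assertion (3), I treat both directions. Assume first that $Y$ is contractible via $\gamma\colon Y\times[0,1]\to Y$ with $\gamma(y,0)=y$ and $\gamma(y,1)=y_0$. Given disjoint functionally closed $A,B\subseteq X$ and continuous $f,g\colon X\to Y$, pick a continuous $\phi\colon X\to[0,1]$ with $\phi|_A\equiv 0$ and $\phi|_B\equiv 1$, and set
\[
h(x)=\begin{cases}\gamma(f(x),2\phi(x))&\text{if }\phi(x)\le 1/2,\\ \gamma(g(x),2-2\phi(x))&\text{if }\phi(x)\ge 1/2;\end{cases}
\]
the two branches agree on $\phi^{-1}(1/2)$ (both equal $y_0$), so $h$ is continuous with $h|_A=f|_A$ and $h|_B=g|_B$. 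Conversely, if $Y\in{\rm AAd}$, apply the adhesive property to $X:=Y\times[0,1]$ with $A:=Y\times\{0\}$, $B:=Y\times\{1\}$ (both functionally closed as preimages of $0$ and $1$ under the projection to $[0,1]$), $f(y,t):=y$, and $g(y,t):=y_0$ for any fixed $y_0\in Y$; the resulting $h$ satisfies $h(y,0)=y$ and $h(y,1)=y_0$, witnessing contractibility of $Y$.

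For assertion (2), the geometric key is to produce an open $W$ with $A\subseteq W\subseteq\overline W\subseteq X\setminus B$ and finite boundary $\partial W=\{p_1,\dots,p_n\}$: cover the compact set $A$ by finitely many basic open neighbourhoods with discrete (hence finite, by compactness) boundaries whose closures miss $B$, and take $W$ to be their union. Choose pairwise disjoint open neighbourhoods $V_i\ni p_i$ with $\overline V_i\cap(A\cup B)=\emptyset$, $\overline V_i\cap\overline V_j=\emptyset$ for $i\ne j$, and $\partial V_i$ finite. On $X_1:=X\setminus\bigcup_iV_i$ the subsets $W\cap X_1$ and $X_1\setminus\overline W$ form a clopen partition (since $\partial W\subseteq\bigcup_iV_i$), so putting $h_1:=f$ on the first and $h_1:=g$ on the second defines a continuous $h_1\colon X_1\to Y$. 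It remains to extend $h_1|_{\partial V_i}$ continuously through each $\overline V_i$: pick a base point $y^*\in Y$, a path $\lambda_q\colon[0,1]\to Y$ from $h_1(q)$ to $y^*$ for every $q\in\partial V_i$, pairwise disjoint open neighbourhoods $O_q\subseteq\overline V_i$ of $q$, and Urysohn functions $u_q$ with $u_q(q)=0$ and $u_q\equiv 1$ off $O_q$; then $h(x):=\lambda_q(u_q(x))$ for $x\in O_q$ and $h(x):=y^*$ elsewhere in $\overline V_i$ extends $h_1|_{\partial V_i}$ continuously. A closed-cover gluing over the finite cover $\{X_1,\overline V_1,\dots,\overline V_n\}$ yields the required $h$.

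The main obstacle is the final interpolation step in (2): extending $h_1|_{\partial V_i}$ continuously across each $\overline V_i$ relies crucially on both the finiteness of $\partial V_i$ (supplied by low-dimensionality together with compactness, to arrange disjoint Urysohn neighbourhoods around each boundary point) and the path-connectedness of $Y$. The rest consists of direct constructions in (1) and (3) and a standard cover-and-glue argument in (2).
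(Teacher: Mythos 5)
Your proofs of (1) and (3) are correct and coincide with the paper's: the clopen separation for strongly zero-dimensional $X$, the two-branch formula $\gamma(f(x),2\phi(x))$ / $\gamma(g(x),2-2\phi(x))$ for sufficiency in (3), and the test space $Y\times[0,1]$ with $A=Y\times\{0\}$, $B=Y\times\{1\}$ for necessity are exactly the arguments given there. For (2) your argument is correct but organized differently. The paper treats $A$ and $B$ symmetrically: it produces open $U\supseteq A$ and $V\supseteq B$ with finite disjoint boundaries $\{x_1,\dots,x_{n+m}\}$, builds a single concatenated path $\gamma\colon[1,n+m]\to Y$ through the values $f(x_i)$ and $g(x_j)$, extends the map $x_i\mapsto i$ to a continuous $\tilde\psi\colon X\to[1,n+m]$ by Tietze, and sets $h=\gamma\circ\tilde\psi$ on the whole transition region $X\setminus(U\cup V)$. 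You instead shield only $A$ by one open set $W$ with finite boundary, exploit that $W\cap X_1$ is clopen in $X_1=X\setminus\bigcup_i V_i$ to glue $f$ and $g$ there, and then repair the map locally near each boundary point $p_i$ by paths to a base point $y^*$ and Urysohn bump functions. Both routes rest on the same two pillars (finiteness of boundaries via low-dimensionality plus compactness, and path-connectedness of $Y$ to interpolate across a finite set); your local-patching version avoids the global Tietze extension at the cost of a slightly longer cover-and-glue bookkeeping, and it only needs a one-sided collar around $A$ rather than collars around both $A$ and $B$. I see no gap in either part.
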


\begin{proof} Let $X$ be a topological space, $A,B$ be disjoint functionally closed subsets of $X$ and  $f,g:X\to Y$ be continuous maps.

{\bf (\ref{Thm:AbsAdh:it:1}).} Assume that $X$ is strongly zero-dimensional and choose a clopen set  $U\subseteq X$ such that $A\subseteq U\subseteq X\setminus B$. Then the map $h:X\to Y$,
$$
h(x)=\left\{\begin{array}{ll}
              f(x), & x\in U, \\
              g(x), & x\in X\setminus U,
            \end{array}
\right.
$$
is continuous, $h|_A=f|_A$ and $h|_B=g|_B$.

{\bf (\ref{Thm:AbsAdh:it:2}).} Assume that $X$ is a low-dimensional Hausdorff compact space and $Y$ is a path-connected space. We choose a continuous function $\varphi:X\to [0,1]$ such that $A=\varphi^{-1}(0)$ and $B=\varphi^{-1}(1)$. For each point $x\in A\cup B$ we take its open neighborhood   $O_x$ with the discrete boundary $\partial O_x$ such that  $O_x\subseteq \varphi^{-1}([0,1/3))$ for $x\in A$ and $O_x\subseteq \varphi^{-1}((2/3,1])$ for $x\in B$. Since   $X$ is compact, every boundary $\partial O_x$ is finite. Moreover, there exist finite subcovers $\mathcal U$ and $\mathcal V$ of $(O_x:x\in A)$ and $(O_x:x\in B)$, respectively. Then the sets $U=\cup\mathcal U$ and $V=\cup\mathcal V$ are open neighborhoods of $A$ and $B$, respectively, $\partial U\cap\partial V=\emptyset$ and $|\partial U\cup\partial V|<\aleph_0$. In the case when one of the boundaries of the sets $U$ or $V$ is empty, we can construct the required map   $h$ as in case (\ref{Thm:AbsAdh:it:1})). Hence, we may suppose that  $\partial U\ne\emptyset\ne\partial V$. Let $\partial U=\{x_1,\dots,x_n\}$ and $\partial V=\{x_{n+1},\dots,x_{n+m}\}$ for some $n,m\in\mathbb N$. Taking into account that the space $D=\partial U\cup\partial V$ is finite and Hausdorff, we obtain that a function $\psi:D\to [1,n+m]$, which is defined by the equality $\psi(x_i)=i$ for $i\in\{1,\dots,n+m\}$, is continuous. Let $\tilde\psi:X\to [1,n+m]$ be a continuous  extension of   $\psi$. Denote $y_i=f(x_i)$ for $i\in\{1,\dots,n\}$ and  $y_i=g(x_i)$ for $i\in\{n+1,\dots,n+m\}$. Now we use the path-connectedness of  $Y$ and for every  $i\in\{1,\dots,n+m-1\}$ find a continuous map $\gamma_i:[i,i+1]\to Y$ such that $\gamma_i(i)=y_i$ and $\gamma_i(i+1)=y_{i+1}$. The maps $\gamma_i$ compose a single continuous map $\gamma:[1,n+m]\to Y$ such that $\gamma|_{[i,i+1]}=\gamma_i$ for all $i\in\{1,\dots,n+m-1\}$. We define a map  $h:X\to Y$,
$$
h(x)=\left\{\begin{array}{ll}
              \gamma(\tilde\psi(x)), & x\in X\setminus (U\cup V), \\
              f(x),& x\in U,\\
              g(x),& x\in V.
            \end{array}
\right.
$$
Notice that $h$ is continuous,  $h|_A=f|_A$ and $h|_B=g|_B$.

{\bf (\ref{Thm:AbsAdh:it:3}).} {\it Necessity.} Fix $y^*\in Y$. Consider the space  $X=Y\times [0,1]$, its disjoint functionally closed subsets  $A=Y\times\{0\}$ and $B=Y\times\{1\}$, and continuous maps $f,g:X\to Y$ such that $f(y,t)=y$ and $g(y,t)=y^*$ for all $y\in Y$ and  $t\in [0,1]$. Since $Y$ is adhesive for $X$, there exists a continuous map $\lambda:X\to Y$ such that $\lambda|_A=f|_A$ and $\lambda|_B=g|_B$. This implies that $Y$ is contractible.

{\it Sufficiency.} Assume that $Y$ is contractible and let  $\lambda:Y\times[0,1]\to Y$ be a continuous map such that $\lambda(y,0)=y$ and $\lambda(y,1)=y_0$ for all $y\in Y$. Consider a topological space $X$, its disjoint functionally closed subsets   $A$ and $B$, and continuous maps $f,g:X\to Y$. Let $\varphi:X\to [0,1]$ be a continuous function such that $A=\varphi^{-1}(0)$ and $B=\varphi^{-1}(1)$. Then a map   $h:X\to Y$,
$$
h(x)=\left\{\begin{array}{ll}
              \lambda(f(x),2\varphi(x)), & \varphi(x)\in [0,\tfrac 12], \\
              \lambda(g(x),2-2\varphi(x)), & \varphi(x)\in (\tfrac 12,1],
            \end{array}
\right.
$$
is continuous and satisfies conditions from Definition~\ref{Def:Adhesive}.
\end{proof}

The following example indicates the essentiality of path-connectedness in Theorem~\ref{Thm:AbsAdh}~(\ref{Thm:AbsAdh:it:2}).

\begin{exm}
  There exists a connected set $Y\subseteq\mathbb R^2$ such that for any two distinct continuous functions $f,g:[0,1]\to Y$ and any two disjoint closed sets $A$ and $B$ in $[0,1]$ there is no continuous function $h:[0,1]\to Y$ with $h|_A=f|_A$ and $h|_B=g|_B$.
\end{exm}

\begin{proof} Let $\mathbb Q=\{r_n:n\in\mathbb N\}$ be the set of all rational numbers. For every $n\in\mathbb N$ we consider the function $\varphi_n:\mathbb R\to\mathbb R$,
  $$
  \varphi_n(x)=\left\{\begin{array}{ll}
                        \sin\frac{1}{x-r_n}, & x\ne r_n, \\
                        0, & x=r_n.
                      \end{array}
  \right.
  $$
Define the function $\varphi:\mathbb R\to\mathbb R$,
  $$
  \varphi(x)=\sum\limits_{n=1}^\infty \frac{1}{2^n}\varphi_n(x).
  $$
Let
$$
  Y=\{(x,y)\in\mathbb R^2:y=\varphi(x)\}.
$$
   Observe that for every  $n$ the function $\psi_n(x)=\sum\limits_{k=1}^n \frac{1}{2^k}\varphi_k(x)$ is a Baire-one Darboux function. Since the sequence $(\psi_n)_{n=1}^\infty$ is uniformly convergent to $\varphi$ on $\mathbb R$, $\varphi$ is a Baire-one Darboux function~\cite[Theorem 3.4]{Bru}.  Consequently, the graph $Y$ of $\varphi$ is connected  according to \cite[Theorem 1.1]{Bru}. Notice that the space $Y$ is punctiform (i.e., $Y$ does not contain any continuum of the cardinality larger than one), since $\varphi$ is discontinuous on everywhere dense set $\mathbb Q$ (see \cite{KuSerp}). Then each continuous function between $\mathbb R$ and $Y$ is constant. The statement of the example follows immediately.
   \end{proof}

\begin{exm}\label{exm:cantorAD}{\rm
 a) Let $X=[0,1]$, $C\subseteq [0,1]$ be the Cantor set and $Y=\Delta (C)\subseteq\mathbb R^2$ be the cone over $C$, i.e., the collection of all segments which connect the point $v=(\tfrac 12,1)$ with points $(x,0)$ for all $x\in C$. Then, being contractible, $Y\in {\rm AAd}$. We show that $Y$ is not an extensor for $X$. Indeed, denote by $((a_n,b_n))_{n=1}^\infty$ the sequence of contiguous intervals to the Cantor set and consider the identity embedding  $f:C\to C\times\{0\}$. Assume that there exists a continuous extension $g:[0,1]\to Y$ of the function $f$. Then $g([a_n,b_n])\supseteq [a_n,v]\cup[b_n,v]$ for every   $n\in\mathbb N$, which implies the equality $g([0,1])=Y$. Therefore, we obtain a contradiction, since $Y$ is not locally path-connected.

 b) Let $X=Y=S^1=\{(x,y)\in\mathbb R^2: x^2+y^2=1\}$. By Theorem~\ref{Thm:AbsAdh} the space $S^1$ is adhesive for itself. On the other hand, the continuous maps $f,g:S^1\to S^1$  defined by the equalities $f(x,y)=(x,y)$ and  $g(x,y)=(1,0)$, are not homotopic.}
\end{exm}

\section{Stable Baire classes and their characterization}\label{sec:Stable}

We omit the proof of the following fact, since it is completely similar to the proof of Theorem 2 from  \cite[p.~357]{Kuratowski:Top:1}.

\begin{lem}\label{amb}
 If  $A$ is a functionally ambiguous set of the class  $\alpha>1$ in a topological space $X$, then there exists a sequence $(A_n)_{n=1}^\infty$ of functionally ambiguous sets of classes  $<\alpha$ such that
  \begin{gather}\label{gath:limitset}
    A=\mathop{\rm Lim}\limits_{n\to\infty} A_n=\bigcup\limits_{n=1}^\infty \bigcap\limits_{k=0}^\infty A_{n+k}=\bigcap\limits_{n=1}^\infty \bigcup\limits_{k=0}^\infty A_{n+k}.
  \end{gather}
  Moreover, if $\alpha=\beta+1$ for a limit ordinal  $\beta$, then all the sets $A_n$ can be taken from classes  $<\beta$.
\end{lem}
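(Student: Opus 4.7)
I would mirror Kuratowski's classical construction referenced in the statement, converting the double ambiguity of $A$ into a dual sandwich structure and interpolating ambiguous sets of strictly smaller class.

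First, I would exploit both sides of the ambiguity. Since $A\in\mathcal A_\alpha$, write $A=\bigcup_n F_n$ with $F_n\in\mathcal M_{\gamma_n}$, $\gamma_n<\alpha$. The functional classes are swapped by complementation (an easy induction), so $X\setminus A\in\mathcal A_\alpha$ too, and similarly $X\setminus A=\bigcup_n G_n$ with $G_n\in\mathcal M_{\delta_n}$, $\delta_n<\alpha$. Passing to cumulative finite unions, which stay in the same multiplicative class because $\mathcal M_\gamma$ is closed under finite unions, one may assume both $F_n$ and $G_n$ are increasing; disjointness $F_n\cap G_n=\emptyset$ is then automatic.

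Second, for each $n$ I would select a functionally ambiguous set $A_n$ of class $<\alpha$ realizing the sandwich $F_n\subseteq A_n\subseteq X\setminus G_n$. Given any such $A_n$, the required identity \eqref{gath:limitset} is immediate: for $x\in A$ one has $x\in F_n$ for all large $n$, hence $x\in A_n$ eventually; for $x\notin A$ one has $x\in G_n$ for all large $n$, hence $x\notin A_n$ eventually. This already yields $\liminf_n A_n=\limsup_n A_n=A$, so the whole task reduces to choosing $A_n$ of the right class.

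When $\alpha$ is a limit ordinal the naive choice $A_n=F_n$ works: $F_n\in\mathcal M_{\gamma_n}\subseteq\mathcal M_{\gamma_n+1}$, and as a one-term union of $\mathcal M_{\gamma_n}$-sets it also lies in $\mathcal A_{\gamma_n+1}$, so $A_n$ is ambiguous of class $\gamma_n+1<\alpha$. When $\alpha=\beta+1$ is a successor, this naive choice fails whenever $\gamma_n=\beta$, and one must re-expand $F_n$ and $G_n$ via their own intersection representations $F_n=\bigcap_k V_{n,k}$, $G_n=\bigcap_k W_{n,k}$ with $V_{n,k},W_{n,k}\in\mathcal A_{<\beta}$ (decreasing in $k$), and then build $A_n$ diagonally from suitable finite partial intersections so as to still separate $F_n$ from $G_n$ yet land in a strictly smaller ambiguous class. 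For the \emph{moreover} clause, in which $\beta$ is additionally a limit ordinal, the indices $\nu_{n,k}$ with $V_{n,k}\in\mathcal A_{\nu_{n,k}}$ all satisfy $\nu_{n,k}<\beta$, so any finite combination remains in $\mathcal A_\nu$ for some $\nu<\beta$ and the same diagonal can be calibrated so that $A_n$ is ambiguous of class $<\beta$.

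I expect the main obstacle to be the successor case, specifically the \emph{moreover} refinement: the diagonal truncation of the $V_{n,k}$'s and $W_{n,k}$'s must be coordinated with a Kuratowski-type separation of the disjoint sets $F_n$ and $G_n$ at classes strictly below $\beta$, a point where the structure of the functional multiplicative and additive hierarchies has to be used non-trivially and is sensitive to whether $\beta$ is limit or successor.
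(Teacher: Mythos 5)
The paper gives no proof of this lemma --- it defers to Theorem~2 on p.~357 of Kuratowski's \emph{Topology~I} --- so your plan is being measured against that classical argument. Your architecture coincides with it: represent $A=\bigcup_n F_n$ and $X\setminus A=\bigcup_n G_n$ as increasing unions of sets of functionally multiplicative classes $<\alpha$, interpolate, and observe that the identity \eqref{gath:limitset} then follows pointwise. That reduction is correct, and your treatment of limit $\alpha$ (take $A_n=F_n$, which is ambiguous of class $\gamma_n+1<\alpha$) is complete.

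The gap is in the successor case, and it is twofold. First, for $\alpha=\beta+1$ the tool that actually produces the sandwich $F_n\subseteq A_n\subseteq X\setminus G_n$ with $A_n$ ambiguous of class $\beta$ is the first separation theorem for the class $\mathcal M_\beta$ (equivalently, reduction for $\mathcal A_\beta$), valid precisely because $\beta\ge 1$ --- this is where the hypothesis $\alpha>1$ enters, and your plan never identifies it. Second, your proposed substitute --- diagonal finite truncations of the representations $F_n=\bigcap_k V_{n,k}$, $G_n=\bigcap_k W_{n,k}$ chosen ``so as to still separate $F_n$ from $G_n$'' --- cannot work as described: disjointness of two countable intersections gives no disjointness at any finite stage (compare $V_k=W_k=(0,1/k)$), and for the \emph{moreover} clause the goal itself is unattainable, since two disjoint sets of multiplicative class $\beta$ need not be separable by \emph{any} ambiguous set of class $<\beta$ (take $F$ properly ambiguous of class $\beta$ and $G=X\setminus F$: the only separating set is $F$). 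The correct diagonal abandons the per-$n$ sandwich: set, say, $A_n=\bigcup_{m\le n}\bigl(V_{m,n}\cap\bigcap_{j\le m}(X\setminus W_{j,n})\bigr)$ with the $V$'s and $W$'s decreasing in the second index, and verify $A=\mathop{\rm Lim}_n A_n$ directly --- for $x\in A$ fix $m$ with $x\in F_m$ and note $x\notin W_{j,n}$ for $j\le m$ once $n$ is large; for $x\notin A$ fix $j_0$ with $x\in G_{j_0}$ and note that each term with $m\ge j_0$ is killed by $W_{j_0,n}$ while the finitely many terms with $m<j_0$ are killed by $x\notin V_{m,n}$ for large $n$. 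Since each $A_n$ is a finite Boolean combination of ambiguous sets of classes $<\beta$, it stays of class $<\beta$ when $\beta$ is a limit, which is exactly the \emph{moreover} clause; this verification, not the class bookkeeping, is the one nontrivial computation in the proof, and it is missing from your plan.
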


\begin{thm}\label{th:char_B1st}
Let  $X$ be a topological space, $Y$ be a topological space with  the functionally closed diagonal $\Delta=\{(y,y):y\in Y\}$, $\alpha\in[1,\omega_1)$ and let  $\beta=\alpha$, if  $\alpha<\omega_0$,  and  $\beta=\alpha+1$, if  $\alpha\ge\omega_0$.
For a map $f:X\to Y$ we consider the following conditions:
   \begin{enumerate}
    \item\label{it:th:char_B1st:1} $f\in {\rm B}_\alpha^{\rm st}(X,Y)$;

    \item\label{it:th:char_B1st:2} there exist an increasing sequence $(X_n)_{n=1}^\infty$ of sets of functionally multiplicative classes  $<\beta$ and a sequence $(f_n)_{n=1}^\infty$ of maps $f_n\in{\rm B}_{<\alpha}^{\rm st}(X,Y)$ such that $X=\bigcup\limits_{n=1}^\infty X_n$ і $f_n|_{X_n}=f|_{X_n}$ for all $n\in\mathbb N$;

    \item\label{it:th:char_B1st:3} there exist a partition $(X_n:n\in\mathbb N)$ of $X$ by functionally ambiguous sets of the class $\beta$ and a sequence of continuous maps $f_n:X\to Y$ such that  $f_n|_{X_n}=f|_{X_n}$ for all $n\in\mathbb N$.
  \end{enumerate}
  Then  $(\ref{it:th:char_B1st:1})\Leftrightarrow(\ref{it:th:char_B1st:2})\Rightarrow (\ref{it:th:char_B1st:3})$. If one of the following conditions hold
  \begin{enumerate}
   \item[(a)] $Y$ is adhesive for $X$, or

   \item[(b)] $Y$ is path-connected $\sigma$-adhesive for  $X$,
  \end{enumerate}
then $(\ref{it:th:char_B1st:3})\Rightarrow(\ref{it:th:char_B1st:2})$.
\end{thm}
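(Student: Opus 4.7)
My plan is to prove the implications in cascade. The easy direction $(2)\Rightarrow(1)$ is immediate: if $X_n\nearrow X$ and $f_n\equiv f$ on $X_n$, then every $x\in X$ lies in some $X_N$, so $f_n(x)=f(x)$ for all $n\ge N$; the sequence $(f_n)$ is therefore stably convergent to $f$ and, since each $f_n\in {\rm B}_{<\alpha}^{\rm st}(X,Y)$, yields $f\in {\rm B}_\alpha^{\rm st}(X,Y)$.

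For $(1)\Rightarrow(2)$, I write $f$ as the stable limit of a sequence $g_k\in {\rm B}_{\gamma_k}^{\rm st}(X,Y)$ with $\gamma_k<\alpha$, and take $f_n=g_n$ and
$$
X_n=\bigcap_{k\ge n}\{x\in X:g_k(x)=g_n(x)\}.
$$
Stable convergence forces $X_n=\{x:g_k(x)=f(x)\text{ for all }k\ge n\}$, so $(X_n)$ is increasing, has union $X$, and $g_n\equiv f$ on $X_n$. Because $\Delta$ is functionally closed in $Y\times Y$ and each pair $(g_k,g_n):X\to Y\times Y$ lies in the ordinary Baire class $\max(\gamma_k,\gamma_n)<\alpha$, the set $\{g_k=g_n\}=(g_k,g_n)^{-1}(\Delta)$ belongs to $\mathcal M_{<\alpha}$; since the multiplicative classes are closed under countable intersection, $X_n\in\mathcal M_{<\beta}$, with the cases $\alpha<\omega_0$ (giving a bound in $\mathcal M_{\alpha-1}$) and $\alpha\ge\omega_0$ (giving at most $\mathcal M_\alpha\subseteq\mathcal M_{<\beta}$) explaining the two-valued definition of $\beta$. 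For $(2)\Rightarrow(3)$, the telescoping differences $Y_n=X_n\setminus X_{n-1}$ partition $X$ and lie in $\mathcal M_\beta\cap\mathcal A_\beta$, because $\mathcal M_{<\beta}\subseteq\mathcal M_\beta\cap\mathcal A_\beta$; transfinite induction on $\alpha$ then gives each $f_n\in {\rm B}_{<\alpha}^{\rm st}(X,Y)$ a decomposition as in (3) at a lower class, and intersecting those partitions with $Y_n$ followed by reindexing converts the ${\rm B}_{<\alpha}^{\rm st}$ witnesses into continuous ones.

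The main obstacle is $(3)\Rightarrow(2)$ under (a) or (b). Starting from a partition $(Z_k)$ with continuous $h_k$ satisfying $h_k\equiv f$ on $Z_k$, I first represent each $Z_k$ as an increasing union $\bigcup_n F_n^{(k)}$ with $F_n^{(k)}\in\mathcal M_{<\beta}$ — directly from $Z_k\in\mathcal A_1$ when $\beta=1$, and via Lemma~\ref{amb} with $F_n^{(k)}=\bigcap_{m\ge n}A_m^{(k)}$ when $\beta>1$. Setting $X_n=\bigsqcup_{k\le n}F_n^{(k)}$ (disjoint because the $Z_k$ are) gives an increasing sequence in $\mathcal M_{<\beta}$ exhausting $X$, with $f$ agreeing with the continuous $h_k$ on each piece. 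It then remains to splice $h_1,\dots,h_n$ into a single map $f_n\in {\rm B}_{<\alpha}^{\rm st}(X,Y)$ equal to $f$ on $X_n$. When $\alpha=\beta=1$ the pieces are functionally closed and iterating the two-set adhesive property of Definition~\ref{Def:Adhesive} across the $n$ disjoint pieces produces the required continuous $f_n$. For higher $\alpha$ the pieces are only in $\mathcal M_{<\beta}$, and I plan to obtain $f_n$ as a stable limit of continuous maps built by applying adhesiveness (case (a)) or $*$-adhesive extension combined with path concatenation in $Y$ (case (b)) to functionally closed approximations of the $F_n^{(k)}$ extracted from their ambiguous-class decompositions. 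The delicate bookkeeping is to keep the stable Baire class of the assembled $f_n$ strictly below $\alpha$ while preserving exact agreement with $f$ on $X_n$.
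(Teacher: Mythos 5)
Your treatment of $(\ref{it:th:char_B1st:1})\Leftrightarrow(\ref{it:th:char_B1st:2})\Rightarrow(\ref{it:th:char_B1st:3})$ matches the paper's argument (diagonal preimages for $(1)\Rightarrow(2)$, telescoping plus induction for $(2)\Rightarrow(3)$), and your base case $\alpha=1$ of $(3)\Rightarrow(2)$ under (a) — iterating the two-set adhesive property over the disjoint functionally closed pieces $F_{mn}$, $m\le n$ — is exactly what the paper does. The problems are in the two places you leave as sketches. First, in case (b) even for $\alpha=1$: Definition~\ref{Def:StarAdhesive} can only be applied to a functionally closed set $A$ with $f(A)\subseteq Y_j$ for a single member $Y_j$ of the $\sigma$-adhesive cover, so before you can extend off the pieces you must refine the partition so that each piece is mapped by $f$ into one $Y_j$. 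That refinement requires knowing that $f^{-1}(Y_j)$ is functionally ambiguous of class $\beta$ (which the paper proves from condition~(\ref{it:th:char_B1st:3}) itself, using $f^{-1}(B)=\bigcup_n(f_n^{-1}(B)\cap X_n)$), and then a path $\gamma$ through the base points $y_j^*$ together with a separating function $\varphi:X\to[1,n]$ to glue the local extensions. "Path concatenation" alone does not cover this; without the refinement step the $*$-adhesive hypothesis is simply not applicable.

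Second, and more seriously, your plan for $\alpha>1$ defers precisely the step that carries the proof. You propose to splice the continuous $h_k$ along the sets $F_n^{(k)}\in\mathcal M_{<\beta}$ via ``functionally closed approximations,'' but adhesiveness only glues along functionally closed sets, and it is not explained how the resulting stable limits stay in ${\rm B}_{<\alpha}^{\rm st}$ while agreeing with $f$ \emph{exactly} on $X_n$. The paper avoids this entirely: it inducts on the implication $(\ref{it:th:char_B1st:3})\Rightarrow(\ref{it:th:char_B1st:1})$, writes each partition element as $X_m=\mathop{\rm Lim}_n A_{mn}$ with $A_{mn}$ functionally \emph{ambiguous} of lower class (Lemma~\ref{amb}), disjointifies to $B_{mn}=A_{mn}\setminus\bigcup_{k<m}A_{kn}$, and defines $g_n$ piecewise by the continuous maps $f_m$ on the partition $\{B_{mn}:m<n\}\cup\{\text{rest}\}$. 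This $g_n$ satisfies condition~(\ref{it:th:char_B1st:3}) at a lower level, so the inductive hypothesis gives $g_n\in{\rm B}_\gamma^{\rm st}(X,Y)$ with no gluing of continuous maps needed at stage $\alpha$; stable convergence of $(g_n)$ to $f$ is then read off pointwise from the $\mathop{\rm Lim}$ representation, and $(\ref{it:th:char_B1st:1})\Rightarrow(\ref{it:th:char_B1st:2})$ finishes. You should replace your splicing plan by this construction (or supply the missing control on the class of your spliced maps, which I do not see how to do directly).
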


\begin{proof}   {\bf (\ref{it:th:char_B1st:1})$\Rightarrow$(\ref{it:th:char_B1st:2}).} Assume that the diagonal $\Delta$ is functionally closed in $Y^2$. Let  $(f_n)_{n=1}^\infty$ be a sequence of maps $f_n\in{\rm B}_{<\alpha}^{\rm st}(X,Y)$ which is stably convergent to  $f$ on $X$. For $k,n\in\mathbb N$ we put
  \begin{gather*}
X_{k,n}=\{x\in X:f_k(x)=f_n(x)\}\quad\mbox{and}\quad  X_n=\bigcap\limits_{k=n}^\infty X_{k,n}.
  \end{gather*}
Clearly, $X_n\subseteq X_{n+1}$, $X=\bigcup\limits_{n=1}^\infty X_n$ and $f_n|_{X_n}=f|_{X_n}$ for every $n\in\mathbb N$.

For all $x\in X$ and $k,n\in\mathbb N$ we put $h_{k,n}(x)=(f_k(x),f_n(x))$. In the case $\alpha=\gamma+1<\omega_0$ we can assume that $f_n\in {\rm B}_{\gamma}^{\rm st}(X,Y)$ for all $n\in\mathbb N$. Then the equality
$$
X_{k,n}=h_{k,n}^{-1}(\Delta)
$$
implies that $X_{k,n}\in \mathcal M_{\gamma}$ and $X_n\in \mathcal M_\gamma$. Suppose    $\alpha\ge\omega_0$. If $\alpha=\omega_0$, then each map $f_n$ can be taken from the class ${\rm B}_{n}^{\rm st}(X,Y)$. Then $X_{k,n}\in\mathcal M_{k}$ for all  $k\ge n$, which implies that $X_n\in\mathcal M_{\omega_0}$. Now let  $\alpha>\omega_0$. We can assume that $f_n\in {\rm B}_{\alpha_n}^{\rm st}(X,Y)$, where $\omega_0\le\alpha_n<\alpha_n+1\le\alpha$ for all $n\in\mathbb N$. Then $X_{k,n}\in\mathcal M_{\max\{\alpha_n,\alpha_k\}+1}\subseteq\mathcal M_\alpha$ and $X_n\in\mathcal M_\alpha$.

  {\bf (\ref{it:th:char_B1st:2})$\Rightarrow$(\ref{it:th:char_B1st:1}).} Since the sequence $(X_n)_{n=1}^\infty$ is increasing, $(f_n)_{n=1}^\infty$ is convergent stably to $f$ on $X$.

 {\bf (\ref{it:th:char_B1st:2})$\Rightarrow$(\ref{it:th:char_B1st:3}).} We will argue by the induction.  For $\alpha=1$ we take a sequence $(F_n)_{n=1}^\infty$ of functionally closed sets and a sequence $(f_n)_{n=1}^\infty$ of continuous maps  $f_n:X\to Y$ such that $f_n|_{F_n}=f|_{F_n}$ and $X=\bigcup\limits_{n=1}^\infty F_n$. We set $X_1=F_1$ and $X_{n}=F_n\setminus (F_1\cup\dots\cup F_{n-1})$  for $n\ge 2$. Then the family $(X_n:n\in\mathbb N)$ is the required partition of the space $X$.

Assume that the implication (\ref{it:th:char_B1st:2})$\Rightarrow$(\ref{it:th:char_B1st:3}) holds for all $\gamma<\alpha$ for some $\alpha\in[1,\omega_0)$. Let $(A_n)_{n=1}^\infty$ be an increasing sequence of sets of the $(\alpha-1)$'th functionally multiplicative class and let  $(g_n)_{n=1}^\infty$ be a sequence of maps from the class ${\rm B}_{\alpha-1}^{\rm st}(X,Y)$ such that
\begin{gather}\label{gath:th:char_B1st:1}
X=\bigcup\limits_{n=1}^\infty A_n\quad\mbox{and}\quad g_n|_{A_n}=f|_{A_n}\quad\mbox{for every\,\,} n\in\mathbb N
\end{gather}
By the inductive assumption and by equivalence $(\ref{it:th:char_B1st:1})\Leftrightarrow(\ref{it:th:char_B1st:2})$, for every $n$ there exist  a sequence $(B_{nm})_{m=1}^\infty$ of mutually disjoint functionally ambiguous sets of the class  $\alpha-1$ and a sequence $(h_{nm})_{m=1}^\infty$ of continuous maps $h_{nm}:X\to Y$ such that
\begin{gather}\label{gath:th:char_B1st:2}
  h_{nm}|_{B_{nm}}=g_n|_{B_{nm}}\quad\mbox{for all}\quad m\in\mathbb N.
\end{gather}
For all $n,m\in\mathbb N$ we set
 \begin{gather}\label{gath:th:char_B1st:3}
X_{nm}=(A_{n}\setminus \bigcup\limits_{k=0}^{n-1} A_{k})\cap B_{nm},
 \end{gather}
 where $A_0=\emptyset$. Then the partition  $(X_{nm}:n,m\in\mathbb N)$ of the space $X$ is the required one.

We show that (\ref{it:th:char_B1st:2})$\Rightarrow$(\ref{it:th:char_B1st:3}) for all $\alpha\in [\omega_0,\omega_1)$ under the assumption that $Y$ has the functionally closed diagonal. Again we will argue by the transfinite induction. Let $\alpha=\omega_0$, $(A_n)_{n=1}^\infty$ be an increasing sequence of sets of the $\omega_0$'th functionally multiplicative class and $(g_n)_{n=1}^\infty$ be a sequence of maps $g_n\in{\rm B}_n^{\rm st\,\,}(X,Y)$ such that (\ref{gath:th:char_B1st:1}) holds. By implication  (\ref{it:th:char_B1st:1})$\Rightarrow$(\ref{it:th:char_B1st:2}) proved above  for  $g_n$ and by implication (\ref{it:th:char_B1st:2})$\Rightarrow$(\ref{it:th:char_B1st:3}) proved above for finite ordinals, we obtain that for every $n\in\mathbb N$ there exist a partition $(B_{nm})_{m=1}^\infty$ of the space $X$ by functionally ambiguous sets of the class $n$ and a sequence $(h_{nm})_{m=1}^\infty$ of continuous maps such that (\ref{gath:th:char_B1st:2}) is valid. It remains to apply (\ref{gath:th:char_B1st:3}). Further, the inductive step is proved similarly to the case of finite ordinals.

Now we prove that {\bf (\ref{it:th:char_B1st:3})$\Rightarrow$(\ref{it:th:char_B1st:2})} in the case $\alpha=1$.
Assume that condition (a) holds. For every  $n\in\mathbb N$ we take an increasing sequence $(F_{nm})_{m=1}^\infty$ of functionally closed sets in $X$ such that   $X_n=\bigcup\limits_{m=1}^\infty F_{nm}$ and set $\tilde X_n=\bigcup\limits_{m=1}^n F_{mn}$. Then  $(\tilde X_n)_{n=1}^\infty$ is an increasing sequence of functionally closed sets which covers the space $X$. Since $Y$ is adhesive for $X$, for every $n\in\mathbb N$ there exists a continuous map $\tilde f_n:X\to Y$ such that $\tilde f_n|_{F_{mn}}=f_m|_{F_{mn}}$ for all $m\in\{1,\dots,n\}$.  Clearly,  $\tilde f_n|_{\tilde X_n}=f|_{\tilde X_n}$ for all $n\in\mathbb N$.

Now suppose that condition {\bf (b)} holds. Let $Y$ be a path-connected $\sigma$-adhesive for $X$ and $(Y_n:n\in\mathbb N)$ be a cover of the space $Y$
by functionally closed subspaces  $Y_n$ such that $(Y,Y_n)\in {\rm Ad}^*(X)$ for every  $n$.

We prove that the preimage of each functionally closed subset of $Y$ under the mapping $f$ is functionally ambiguous of the class $\beta$ in $X$.
Indeed, take a functionally closed set $B\subseteq Y$. Then $f^{-1}(B)=\bigcup\limits_{n=1}^\infty (f_n^{-1}(B)\cap X_n)$. Since $f_n:X\to Y$ is continuous, $f_n^{-1}(B)$ is functionally closed in $X$. Therefore, $f^{-1}(B)$ belongs to the $\beta$'th additive class in $X$. Moreover,
$X\setminus f^{-1}(B)=\bigcup\limits_{n=1}^\infty (f_n^{-1}(Y\setminus B)\cap X_n)$. Since $f_n^{-1}(Y\setminus B)$ is functionally open in $X$, we have that $X\setminus f^{-1}(B)$ belongs to the functionally additive class $\beta$ in $X$. Thus, $f^{-1}(B)$ is functionally ambiguous of the $\beta$'th class in $X$.


For every $k,n\in\mathbb N$ we put $X_{k,n}=f^{-1}(Y_k)\cap X_n$. Let us remove from the  sequence $(X_{k,n})_{k,n=1}^\infty$ empty sets and let $(Z_n)_{n=1}^\infty$ be an enumeration of the double sequence. Denote $\tilde X_1=Z_1$ and $\tilde X_n=Z_n\setminus \bigcup\limits_{k<n}Z_k$ for $n>1$.
Notice that $(\tilde X_n:n\in\mathbb N)$ is a partition of $X$ by functionally ambiguous sets of the class $\beta$ in $X$. For every $k\in\mathbb N$ we set $N_k=\{n\in\mathbb N: f(\tilde X_n)\subseteq Y_k\}$ and put $\tilde Y_i=Y_k$ for all $i\in N_k$. Hence, we obtain a partition $(\tilde X_n:n\in\mathbb N)$ of $X$ by functionally ambiguous sets of the $\beta$'th class in $X$ such that $f(\tilde X_n)\subseteq \tilde Y_n$ for every $n$ and it is evident that $(Y_n:n\in\mathbb N)$ has the same properties as $(Y_n:n\in\mathbb N)$.

For every $n\in\mathbb N$ we take an increasing sequence $(F_{nm})_{m=1}^\infty$ of functionally closed sets in $X$ such that $\tilde X_n=\bigcup\limits_{m=1}^\infty F_{nm}$ and denote $C_n=\bigcup\limits_{m=1}^n F_{mn}$. We fix $n\in\mathbb N$ and show that the restriction  $f|_{C_n}:C_n\to Y$ has a continuous extension  $g:X\to Y$.

Since the sets $F_{1n}$,\dots, $F_{nn}$ are disjoint and functionally closed, we may choose open sets  $U_1$,\dots, $U_n$ and a continuous function   $\varphi:X\to [1,n]$ such that  $F_{mn}\subseteq U_m$ and  $\overline{U}_{m}\subseteq\varphi^{-1}(m)$ for every $m\in\{1,\dots,n\}$. Further, in each  $\tilde Y_m$ we take a point $y_m^*$ from Definition~\ref{Def:StarAdhesive}. Since $(Y,\tilde Y_m)\in {\rm Ad}^*(X)$, there exists a continuous map  $g_m:X\to Y$ such that $g_m|_{F_{mn}}=f|_{F_{mn}}$ and $g_m|_{X\setminus U_m}=y_{m}^*$. It implies from the path-connectedness of  $Y$ that there exists a continuous map $\gamma:[1,+\infty)\to Y$ such that $\gamma(m)=y_m^*$ for every $m\in\mathbb N$. For all $x\in X$ we set
$$
g(x)=\left\{\begin{array}{ll}
              g_m(x), & \mbox{if}\,\,\, x\in U_m\,\,\,\mbox{for some}\,\,\,m\in\{1,\dots,n\}, \\
              \gamma(\varphi(x)), & \mbox{otherwise}.
            \end{array}
\right.
$$
Then the map $g:X\to Y$ is continuous and $g|_{C_n}=f|_{C_n}$. Hence, condition (\ref{it:th:char_B1st:2}) holds.

Now we suppose that under conditions (a) or  (b) the implication (\ref{it:th:char_B1st:3})$\Rightarrow$(\ref{it:th:char_B1st:1}) is valid for all ordinals  $\gamma\in [1,\alpha)$ for some  $\alpha\in(1,\omega_1)$ and prove it for $\alpha$. Consider the case $\alpha=\gamma+1<\omega_0$. By Lemma~\ref{amb} for every  $m$ there exists a sequence $(A_{mn})_{n=1}^\infty$ of functionally ambiguous sets of the class $\gamma$ such that $X_m=\mathop{\rm Lim}\limits_{n\to\infty}A_{mn}$. For all $m,n\in\mathbb N$ we set
 \begin{gather}\label{eq:cond_on_Dmn1}
 B_{mn}=A_{mn}\setminus\bigcup\limits_{k<m} A_{kn}.
 \end{gather}
Then each set $B_{mn}$ is functionally ambiguous of the class $\gamma$.

For every $n\in\mathbb N$ we set
$$
g_n(x)=\left\{\begin{array}{ll}
                f_{m}(x), & \mbox{if}\,\,\, x\in B_{mn}\,\,\,\mbox{for}\,\,\,m<n,\\
                f_{n}(x), & \mbox{otherwise}
              \end{array}
\right.
$$
and, applying the inductive assumption, we get  $g_n\in {\rm B}_{\gamma}^{\rm st}(X,Y)$.

It remains to prove that $(g_n)_{n=1}^\infty$ is stably convergent to $f$ on $X$. Fix $x\in X$ and choose a number  $m$ such that  $x\in X_{m}$ and $x\not\in X_k$ for all  $k\ne {m}$. Equality~(\ref{gath:limitset}) implies that there are numbers $N_1,\dots,N_{m}$ such that
 $$
 x\not\in \bigcup\limits_{n\ge N_k} A_{kn}\,\,\mbox{if}\,\, k<m\,\,\,\,\mbox{and}\,\,\,\,  x\in \bigcap\limits_{n\ge N_{m}} A_{mn}.
 $$
Hence, for all $n\ge n_{0}=\max\{N_1,\dots,N_{m}\}$ the inclusion $x\in B_{mn}\cap X_m$ holds. Therefore, $g_n(x)=f_m(x)=f(x)$ for all $n\ge n_0$.

In the case $\alpha\ge \omega_0$ we observe that each set $X_n$ is functionally ambiguous of the class $\alpha+1$ and the sets $A_{mn}$ (together with the sets $B_{mn}$) are functionally ambiguous either of the class $\alpha$, or of classes $<\alpha$ in the case of limit $\alpha$. Then by the inductive assumption we have  $g_n\in {\rm B}_{\gamma}^{\rm st}(X,Y)$, if  $\alpha=\gamma+1>\omega_0$, and  $g_n\in {\rm B}_{<\alpha}^{\rm st}(X,Y)$, if $\alpha$ is limit. In any case, $f\in {\rm B}_{\alpha}^{\rm st}(X,Y)$.
\end{proof}

Let us observe that in the proof of implication   (\ref{it:th:char_B1st:2})$\Rightarrow$(\ref{it:th:char_B1st:1}) we do not use the fact that $Y$ has the functionally closed diagonal. The following example show that this property is essential for implication (\ref{it:th:char_B1st:1})$\Rightarrow$(\ref{it:th:char_B1st:2}).

\begin{exm}
Let $D$ be an uncountable discrete space and $X=Y=D\sqcup\{a \}$ be the Alexandroff compactification of $D$. Then there exists $f\in {\rm B}^{{\rm st}}_1(X,Y)$ such that for every functionally measurable subset $A\subseteq X$ with $a\in A$ the restriction $f|_A$ is discontinuous at $a$.
\end{exm}

\begin{proof} Let $D=\bigsqcup\limits_{n=1}^{\infty}D_n$ such that $|D|=|D_n|$ for every $n\in\mathbb N$. We choose a sequence of bijections $\varphi_n:D_n\to D$ and consider the function $f:X\to Y$,
$$
 f(x)=\left\{\begin{array}{ll}
                         a, & x=a,\\
                         \varphi_n(x), & n\in\mathbb N\,\,\,{\rm and}\,\,\, x\in D_n.
                       \end{array}
 \right.
 $$
Note that $f\in {\rm B}^{\rm st}_1(X,Y)$, because $f$ is the stable limit of the sequence of  continuous functions $f_n:X\to Y$,
$$
 f_n(x)=\left\{\begin{array}{ll}
                         a, & x\in \{a\}\cup(\bigcup\limits_{k=n+1}^\infty D_k),\\
                         \varphi_k(x), & k\leq n\,\,\,{\rm and}\,\,\, x\in D_k.
                       \end{array}
 \right.
 $$

Fix a functionally measurable subset $A\ni a$ of $X$. Since $|D\setminus B|\leq\aleph_0$ for every functionally open or functionally closed subset $B\ni a$ of $X$, we have $|D\setminus A|\leq\aleph_0$. Thus every set $B_n=D_n\setminus A$ is at most countable. Therefore the set $C=\bigcup\limits_{n=1}^\infty \varphi_n(B_n)$ is at most countable too. We choose $d\in D\setminus C$ and consider the neighborhood $V=Y\setminus \{d\}$ of $a$ in $Y$. Since
$$
A\setminus\{x\in A:f(x)\in V\}=\{\varphi_n^{-1}(d):n\in\mathbb N\},
$$
the restriction $f|_A$ is discontinuous at $a$.
\end{proof}

We show in the following example that the properties (a) and (b) in Theorem~\ref{th:char_B1st} are essential.

\begin{exm} {\rm a)   Let $X=[0,1]^2$ and $Y\subseteq X$ be the Sierpi\'{n}ski carpet. Notice that $Y$ is a Peano continuum.
Fix any $x^*\in Y$ and consider the map $f:X\to Y$ such that $f(x)=x$ for $x\in Y$ and $f(x)=x^*$ for $x\in X\setminus Y$. We put $f_1(x)=x$  and $f_2(x)=x^*$ for all $x\in X$. Notice that $X_1=Y$ and $X_2=X\setminus Y$ are ambiguous subsets of the first class in $X$, $X_1\cup X_2=X$ and  $f|_{X_i}=f_i|_{X_i}$ for $i=1,2$.  Therefore, condition~(\ref{it:th:char_B1st:3}) of Theorem~\ref{th:char_B1st} holds. 

Assume  that $f\in {\rm B}_1^{\rm st}(X,Y)$. Take a sequence $(g_n)_{n=1}^\infty$ of continuous functions $g_n:X\to Y$ and  a closed cover $(\tilde X_n:n\in\mathbb N)$ of $X$ such that  $g_n|_{\tilde X_n}=f|_{\tilde X_n}$ and $\tilde X_n\subseteq \tilde X_{n+1}$ for every  $n$.  By the Baire category theorem, there exists $k\in\mathbb N$ such that the set $\tilde X_k\cap Y$ has the nonempty interior in $Y$. Then there exists an open square $L$ in $Y$ such that $\partial L\subseteq \tilde X_k\cap Y$. Since $g_k|_{\partial L}(x)=x$ for all $x\in \partial L$ and $g_k({\rm int}L)\subseteq X\setminus {\rm int}L$, we have that $\partial L$ is a retract of $\overline L$, which is impossible. Hence, condition~(\ref{it:th:char_B1st:2}) of Theorem~\ref{th:char_B1st} does not hold. Consequently, $Y$ fails to be adhesive for $X$.

b) Let $Y=\{0,1\}$ and $f:[0,1]\to Y$ be the characteristic function of the set $\{0\}$. Clearly, $Y$ is a disconnected $\sigma$-adhesive space for $[0,1]$.
Moreover,  the partition $(\{0\},(0,1])$ of $[0,1]$ and the functions $f_1\equiv 1$ and $f_2\equiv 0$ satisfy condition~(\ref{it:th:char_B1st:3}) of Theorem~\ref{th:char_B1st}, but $f$ is not of the first stable Baire class, since each continuous function between $[0,1]$ and $\{0,1\}$ is constant.}
\end{exm}

Clearly,  if  $\alpha=1$, then condition  (\ref{it:th:char_B1st:3}) of Theorem~\ref{th:char_B1st} implies that $f$ is piecewise continuous. It was proved in~\cite[Theorem 6.3]{BaBo} that every piecewise continuous map belongs to the class ${\rm B}_1^{\rm st}(X,Y)$ when $X$ is a normal space and $Y$ is a path-connected space such that $Y\in\sigma {\rm AE}(X)$. The following example shows that a piecewise continuous map need not be of the first stable Baire class even if  $X=\mathbb R^2$  and $Y$ is a contractible subspace of $\mathbb R^2$.

\begin{exm}
 Let $Y=\Delta(C)$ be the cone over the Cantor set $C\subseteq [0,1]$ defined in Example~\ref{exm:cantorAD}, $y^*\in Y$ be a point and $f:[0,1]^2\to Y$ be a map such that
  $$
  f(x)=\left\{\begin{array}{ll}
                x, & x\in Y, \\
                y^*, & \mbox{otherwise}.
              \end{array}
  \right.
  $$
  Then $f$ is piecewise continuous and $f\not\in {\rm B}_1^{\rm st}([0,1]^2,Y)$.
\end{exm}

\begin{proof} Since $f|_Y$ and $f|_{[0,1]^2\setminus Y}$ is continuous and $Y$ is closed in $[0,1]^2$, $f$ is piecewise continuous.

  Assume that there exist a sequence $(f_n)_{n=1}^\infty$ of continuous functions $f_n:[0,1]^2\to Y$ and a closed cover $(X_n:n\in\mathbb N)$ of the square  $[0,1]^2$ such that  $f_n|_{X_n}=f|_{X_n}$ for every  $n$. By the Baire category theorem, there exists $k\in\mathbb N$ such that the set $X_k\cap Y$ has the nonempty interior in $Y$. Let $F$ be a closed square in $[0,1]^2$ such that $F\cap Y\subseteq X_k\cap Y$ and the interior of $F\cap Y$ in  $Y$ is nonempty. Consider the set $B=f_k(F)$. Since  $B$ is a continuous image of $F$, it should be locally connected. On the other hand, $B$ is not a locally connected set, since $B$ is a closed subspace of $Y$ with nonempty interior.
\end{proof}

\section{Monotone functions and their stable Baire measurability}\label{sec:monotone}
We will establish in this section that
$$
\mathcal M\cap{\rm B}_1^{\rm st}(\mathbb R,\mathbb R)=\mathcal M\cap {\rm B}_2^{\rm st}(\mathbb R,\mathbb R)=\dots=\mathcal M\cap {\rm B}_\alpha^{\rm st}(\mathbb R,\mathbb R)=\dots,
$$
where $\mathcal M$ is  the class of all monotone functions.

The following fact immediately follows from definitions and we omit its proof.

\begin{lem}\label{lemma:monotone_dense}
  Let $X\subseteq\mathbb R$, $f:X\to\mathbb R$ be a monotone function and $g:X\to\mathbb R$ be a continuous function such that $f|_D=g|_D$ for some dense set  $D\subseteq X$. Then  $f=g$ on $X$.
\end{lem}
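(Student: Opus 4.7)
The statement is a straightforward density argument, so I would set up the standard squeeze. After replacing $f$ by $-f$ and $g$ by $-g$ if necessary, assume $f$ is nondecreasing. Fix an arbitrary $x_0 \in X$ and aim to show $f(x_0) = g(x_0)$.

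First I would dispose of the trivial case $x_0 \in D$, where the hypothesis $f|_D = g|_D$ yields the conclusion outright. For $x_0 \in X \setminus D$ I would exploit the density of $D$ in $X$ to choose sequences $(a_n), (b_n) \subseteq D$ with $a_n \le x_0 \le b_n$ and $a_n, b_n \to x_0$. Such sequences exist whenever $x_0$ is a two-sided limit point of $X$, which is the generic situation (and, in particular, holds for every point of $\mathbb R$, the case actually needed in Theorem~\ref{th:mon_stable} below).

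With these sequences in hand, monotonicity of $f$ gives the squeeze $f(a_n) \le f(x_0) \le f(b_n)$, and the agreement of $f$ and $g$ on $D$ rewrites this as $g(a_n) \le f(x_0) \le g(b_n)$. Continuity of $g$ then sends both $g(a_n)$ and $g(b_n)$ to the common limit $g(x_0)$, and the squeeze forces $f(x_0) = g(x_0)$, as required.

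The only genuine obstacle is the handling of a point $x_0 \in X$ that is a one-sided limit point only (for example, an endpoint of $X$ if $X$ is an interval, or a point isolated from one side in $X$): then only a single approximating sequence is available and the squeeze collapses to a single inequality. In the intended application $X = \mathbb R$, so every point is two-sided and the issue evaporates; more generally one observes that if $\{x_0\}$ is relatively open in $X$ from one side then density of $D$ forces $x_0 \in D$ (unless one can still approach from the other side), which again reduces to the trivial case. This is presumably the reason the author calls the lemma immediate from the definitions.
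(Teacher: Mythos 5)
Your squeeze argument is the natural one, and it does settle every point of $X$ that is either isolated in $X$ (density of $D$ then forces $x_0\in D$) or a two-sided limit point of $X$ (density of $D$ in $X$ then makes $x_0$ a two-sided limit point of $D$, and the inequalities $g(a_n)=f(a_n)\le f(x_0)\le f(b_n)=g(b_n)$ close up by continuity of $g$). But the case you flag and then wave away --- a point $x_0\in X\setminus D$ at which $X$ accumulates from one side only --- is a genuine gap, and your proposed escape fails on both counts. First, being ``relatively open in $X$ from one side'' does not put $x_0$ into $D$: density only forces membership of genuinely isolated points, while a one-sided limit point need not lie in $D$ and then yields only a one-sided inequality. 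Second, the intended application is not $X=\mathbb R$: in the proof of Theorem~\ref{th:mon_stable} the lemma is invoked with the role of $X$ played by $G_n$, a relatively open subset of an arbitrary nonempty closed set $F\subseteq\mathbb R$ (for instance a Cantor set), where one-sided limit points are unavoidable.

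Worse, at such points the statement is false as written, so no argument can close the gap: take $X=[0,1]$, $D=(0,1]\cap\mathbb Q$ (dense in $X$), $g(x)=x$, and the increasing function $f$ with $f(0)=-1$ and $f(x)=x$ for $x>0$. Then $f|_D=g|_D$ but $f(0)\ne g(0)$. The paper gives no proof of the lemma, declaring it immediate from the definitions, so there is nothing to compare your argument against; what your attempt actually exposes is that the lemma needs a corrected formulation --- e.g.\ the conclusion holds at every point of $D$ and at every two-sided limit point of $X$, hence off a countable exceptional set --- and the downstream argument in Theorem~\ref{th:mon_stable} must then be checked to tolerate that exceptional set.
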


A map $f:X\to Y$ between topological spaces $X$ and $Y$ is said to be {\it weakly discontinuous}, if for any subset $A\subseteq X$ the discontinuity points set of the restriction  $f|_A$ is nowhere dense in $A$. It is easy to see that a map $f$ is weakly discontinuous if and only if the discontinuity points set of the restriction $f|_F$ to any closed set $F\subseteq X$ is nowhere dense in $F$.

\begin{thm}\label{th:mon_stable}
  For a monotone function $f:\mathbb R\to\mathbb R$ the following conditions are equivalent:
  \begin{enumerate}
    \item\label{it:th:mon_stable:1} $f$ is weakly discontinuous;

    \item\label{it:th:mon_stable:2} $f\in {\rm B}_1^{\rm st}(\mathbb R,\mathbb R)$;

    \item\label{it:th:mon_stable:3} $f\in \bigcup\limits_{\alpha<\omega_1}{\rm B}_\alpha^{\rm st}(\mathbb R,\mathbb R)$.
  \end{enumerate}
\end{thm}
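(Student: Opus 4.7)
The overall plan is to go around the cycle $(\ref{it:th:mon_stable:1})\Rightarrow(\ref{it:th:mon_stable:2})\Rightarrow(\ref{it:th:mon_stable:3})\Rightarrow(\ref{it:th:mon_stable:1})$, of which $(\ref{it:th:mon_stable:2})\Rightarrow(\ref{it:th:mon_stable:3})$ is immediate from the definitions. All the content lies in the other two implications, and monotonicity will only be used in the last one.

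For $(\ref{it:th:mon_stable:1})\Rightarrow(\ref{it:th:mon_stable:2})$ I would first argue that a weakly discontinuous $f\colon\mathbb R\to\mathbb R$ is piecewise continuous. Perform the Cantor--Bendixson-type transfinite recursion $F_0=\mathbb R$, $F_{\gamma+1}=\overline{D(f|_{F_\gamma})}$, and $F_\lambda=\bigcap_{\gamma<\lambda}F_\gamma$ at limits. Each $F_\gamma$ is closed in $\mathbb R$, weak discontinuity forces $F_{\gamma+1}$ to be nowhere dense in $F_\gamma$, and hereditary Lindel\"ofness of $\mathbb R$ makes the chain stabilize at some countable $\gamma_0$ with $F_{\gamma_0}=\emptyset$. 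Each layer $F_\gamma\setminus F_{\gamma+1}$ is open in $F_\gamma$, lies in $C(f|_{F_\gamma})$, and is $F_\sigma$ in $\mathbb R$; collecting countable closed decompositions of the layers across $\gamma<\gamma_0$ produces a countable closed cover $(C_n)_{n=1}^\infty$ of $\mathbb R$ with $f|_{C_n}$ continuous. Disjointify via $X_n=C_n\setminus\bigcup_{k<n}C_k$ so that each $X_n$ is locally closed in $\mathbb R$, hence functionally ambiguous of class~$1$, and Tietze-extend each $f|_{C_n}$ to a continuous $\tilde f_n\colon\mathbb R\to\mathbb R$. Since $\mathbb R$ is contractible, Theorem~\ref{Thm:AbsAdh}(\ref{Thm:AbsAdh:it:3}) gives $\mathbb R\in{\rm AAd}$, and condition~(a) of Theorem~\ref{th:char_B1st} with $\alpha=1$, applied to the partition $(X_n)$ and the maps $(\tilde f_n)$, yields $f\in{\rm B}_1^{\rm st}(\mathbb R,\mathbb R)$.

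For $(\ref{it:th:mon_stable:3})\Rightarrow(\ref{it:th:mon_stable:1})$, suppose $f$ is monotone, $f\in{\rm B}_\alpha^{\rm st}(\mathbb R,\mathbb R)$, and assume for contradiction that $f$ is not weakly discontinuous: some closed $F\subseteq\mathbb R$ contains a nonempty relatively open $V$ in which $D(f|_F)$ is dense. Replacing $F$ by the $\mathbb R$-closure of $V$, I may assume $F$ is a perfect closed subset of $\mathbb R$ with $D(f|_F)$ dense in $F$; in particular, $F$ is Polish and therefore Baire. The equivalence $(\ref{it:th:char_B1st:1})\Leftrightarrow(\ref{it:th:char_B1st:2})$ of Theorem~\ref{th:char_B1st} supplies an increasing cover $\mathbb R=\bigcup_n X_n$ by Borel sets of bounded class together with continuous $f_n\colon\mathbb R\to\mathbb R$ satisfying $f|_{X_n}=f_n|_{X_n}$. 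Each $F\cap X_n$ is Borel in $F$, so it has the Baire property, and Baire's theorem forces some $F\cap X_{n_0}$ to be non-meager in $F$, hence comeager on some nonempty relatively open $U\subseteq F$. Then $X_{n_0}\cap U$ is dense in $U$, and on it the monotone $f|_U$ agrees with the continuous $f_{n_0}|_U$, so Lemma~\ref{lemma:monotone_dense} upgrades this to $f|_U=f_{n_0}|_U$. But then $U$ consists entirely of continuity points of $f|_F$, contradicting the density of $D(f|_F)$ in $F$.

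The main obstacle is the Baire-category step in $(\ref{it:th:mon_stable:3})\Rightarrow(\ref{it:th:mon_stable:1})$: the reduction to a perfect closed $F$ is needed precisely to guarantee that $F$ is a Baire space and that the Borel slices $F\cap X_n$ have the Baire property, so that some slice becomes comeager on a genuine open piece of $F$ and Lemma~\ref{lemma:monotone_dense} becomes applicable. Monotonicity of $f$ is indispensable at this last stage, since without it the agreement of $f$ with a continuous function on a merely comeager subset of $U$ would not propagate to all of $U$.
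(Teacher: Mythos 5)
Your proof is essentially correct, but it differs from the paper's in one implication and contains two small slips worth fixing. For $(\ref{it:th:mon_stable:1})\Rightarrow(\ref{it:th:mon_stable:2})$ the paper simply cites the literature (the equivalence of weak discontinuity and membership in ${\rm B}_1^{\rm st}$ for functions on $\mathbb R$ is taken from \cite{BKMM}), whereas you give a self-contained argument: a Cantor--Bendixson recursion showing that a weakly discontinuous $f:\mathbb R\to\mathbb R$ is piecewise continuous, followed by an application of Theorem~\ref{th:char_B1st} under hypothesis (a), using that $\mathbb R$ is contractible and hence in ${\rm AAd}$ by Theorem~\ref{Thm:AbsAdh}(\ref{Thm:AbsAdh:it:3}). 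This route is sound and has the merit of deriving the implication from the machinery already built in the paper rather than from an external reference; it also makes explicit that monotonicity plays no role here. For $(\ref{it:th:mon_stable:3})\Rightarrow(\ref{it:th:mon_stable:1})$ your argument is in substance the paper's: both rest on Baire category in a closed set $F$ plus Lemma~\ref{lemma:monotone_dense}. The paper argues directly that $\bigcup_n{\rm int}_F\overline{X_n\cap F}$ is a dense open subset of $F$ consisting of continuity points of $f|_F$, while you argue by contradiction and extract a single non-meager slice; these are interchangeable.

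Two corrections. First, you attribute the continuous maps $f_n$ to condition~(\ref{it:th:char_B1st:2}) of Theorem~\ref{th:char_B1st}; that condition only yields $f_n\in{\rm B}_{<\alpha}^{\rm st}$, which would break the appeal to Lemma~\ref{lemma:monotone_dense}. What you need is condition~(\ref{it:th:char_B1st:3}) (available via $(\ref{it:th:char_B1st:1})\Rightarrow(\ref{it:th:char_B1st:2})\Rightarrow(\ref{it:th:char_B1st:3})$, since $\mathbb R$ has a functionally closed diagonal): a partition of $\mathbb R$ by functionally ambiguous sets together with genuinely continuous $f_n$; the Baire-category step then runs unchanged. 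Second, after replacing $F$ by $\overline{V}$ you cannot in general assume $F$ is perfect ($V$ may contain a dense set of isolated points of $F$); but perfectness is not needed, since any closed subset of $\mathbb R$ is completely metrizable, hence Baire, and its Borel subsets have the Baire property, which is all your argument uses.
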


\begin{proof}
 The equivalence of (\ref{it:th:mon_stable:1}) and (\ref{it:th:mon_stable:2}) was established in~\cite{BKMM} (see also~\cite{BaBo} for a more general case).

The implication (\ref{it:th:mon_stable:2})$\Rightarrow$(\ref{it:th:mon_stable:3}) is evident.

 We prove that (\ref{it:th:mon_stable:3})$\Rightarrow$(\ref{it:th:mon_stable:1}). Let  $f\in {\rm B}_\alpha^{\rm st}(\mathbb R,\mathbb R)$ for some $\alpha\in[0,\omega_1)$. By Theorem~\ref{th:char_B1st} there exist a sequence of continuous functions $f_n:\mathbb R\to\mathbb R$ and a partition $(X_n:n\in\mathbb N)$ of the real line such that $f_n|_{X_n}=f|_{X_n}$ for every $n\in\mathbb N$. Consider a nonempty closed set  $F\subseteq \mathbb R$. For every $n$ we denote $G_n={\rm int}_F\overline{X_n\cap F}$. Since $F$ is a Baire space, the union $G=\bigcup\limits_{n=1}^\infty G_n$ is dense in $F$. The equality   $f|_{X_n\cap F}=f_n|_{X_n\cap F}$ and Lemma~\ref{lemma:monotone_dense} imply that $f|_{G_n}=f_n|_{G_n}$ for every $n$. Since every function   $f_n$ is continuous and the set $G_n$ is open in $F$, we have $G_n\subseteq C(f|_F)$. Hence, $G\subseteq C(f|_F)$, which implies that $f$ is weakly discontinuous.
\end{proof}

As a corollary of Theorem~\ref{th:mon_stable} we obtain the following result.
\begin{prop}
  There exists a function $f\in {\rm B}_1(\mathbb R,\mathbb R)\setminus \bigcup\limits_{\alpha<\omega_1}{\rm B}_\alpha^{\rm st}(\mathbb R,\mathbb R)$.
\end{prop}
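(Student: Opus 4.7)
The plan is to exhibit a monotone Baire-one function whose set of discontinuities is dense in $\mathbb{R}$, and then invoke Theorem~\ref{th:mon_stable} to rule out membership in any stable Baire class. The point is that Theorem~\ref{th:mon_stable} collapses the stable Baire hierarchy for monotone functions down to a single combinatorial condition (weak discontinuity), so to obtain a function outside $\bigcup_{\alpha<\omega_1}{\rm B}_\alpha^{\rm st}(\mathbb{R},\mathbb{R})$ it suffices to stay within the monotone class and violate weak discontinuity as strongly as possible.

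Concretely, enumerate $\mathbb{Q}=\{r_n:n\in\mathbb{N}\}$ and define the classical jump function
\[
f(x)=\sum_{n:\,r_n\le x}\frac{1}{2^n}.
\]
First I would verify that $f$ is monotone (immediate from the definition), and that its discontinuity set is exactly $\mathbb{Q}$ (at each $r_n$ the function has a jump of size $2^{-n}$, and between any two rationals the series sum is continuous as a function of $x$). Then I would note that $f$ lies in ${\rm B}_1(\mathbb{R},\mathbb{R})$, which is standard: every monotone function on $\mathbb{R}$ has at most countably many discontinuities and is therefore a pointwise limit of a sequence of continuous functions (for instance, by approximating via piecewise linear interpolation on a dense countable set containing all the jump points).

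Next I would apply Theorem~\ref{th:mon_stable}. Taking $A=\mathbb{R}$ in the definition of weak discontinuity, the discontinuity set of $f|_A=f$ equals $\mathbb{Q}$, which is dense in $\mathbb{R}$ and hence certainly not nowhere dense. Therefore $f$ is not weakly discontinuous, and by the equivalence (\ref{it:th:mon_stable:1})$\Leftrightarrow$(\ref{it:th:mon_stable:3}) of Theorem~\ref{th:mon_stable} we conclude $f\notin\bigcup_{\alpha<\omega_1}{\rm B}_\alpha^{\rm st}(\mathbb{R},\mathbb{R})$.

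There is no real obstacle here: all the work was done in Theorem~\ref{th:mon_stable}. The only thing to check carefully is the Baire-one membership, but for a monotone function this is classical (monotone functions are of the first Baire class, e.g.\ approximate $f$ by the continuous functions obtained by linearly interpolating between values at a finite dense set of non-jump points and letting the set increase). Combined with the monotonicity of $f$ and the density of its jump set, this yields $f\in{\rm B}_1(\mathbb{R},\mathbb{R})\setminus\bigcup_{\alpha<\omega_1}{\rm B}_\alpha^{\rm st}(\mathbb{R},\mathbb{R})$.
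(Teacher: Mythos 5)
Your proposal is correct and follows exactly the paper's argument: the same jump function $f(x)=\sum_{r_n\le x}2^{-n}$, the same observation that $D(f)=\mathbb Q$ is dense so $f$ fails weak discontinuity, and the same appeal to Theorem~\ref{th:mon_stable} to exclude $f$ from every stable Baire class. The extra detail you supply on Baire-one membership of monotone functions is standard and consistent with what the paper takes for granted.
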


\begin{proof}
We consider the increasing function   $f:\mathbb R\to\mathbb R$,
  $$
  f(x)=\sum\limits_{r_n\le x}\frac{1}{2^n},
  $$
where $\mathbb Q=\{r_n:n\in\mathbb N\}$. Since $f$ is monotone, $f\in{\rm B}_1(\mathbb R,\mathbb R)$. But  $D(f)=\mathbb Q$. Therefore, $f$ is not weakly discontinuous. Hence,  $f\not\in \bigcup\limits_{\alpha<\omega_1}{\rm B}_\alpha^{\rm st}(\mathbb R,\mathbb R)$ by Theorem~\ref{th:mon_stable}.
\end{proof}


\begin{thebibliography}{99}
\bibitem{BaBo} T. Banakh, B. Bokalo {\it On scatteredly continuous maps between topological spaces},  Topology Appl. {\bf 157} (1) (2010), 108–-122.

\bibitem{BKMM} T. Banakh, S. Kutsak, O. Maslyuchenko, V. Maslyuchenko, {\it Direct and inverse problems of the Baire classifications of integrals dependent on a parameter}, Ukr. Math. J. {\bf 56} (11) (2004), 1443--1457.

\bibitem{Bru} A.~Bruckner, {\it Differentiation of Real Functions} [2nd
ed.], Providence, RI: American Mathematical Society, 1994,
195~p.


\bibitem{CsLacz:1}  \'{A}. Cs\'{a}sz\'{a}r, M. Laczkovich, {\it Discrete and equal convergence}, Studia Sci. Math. Hungar. {\bf 10}
(1975), 463--472.

\bibitem{CsLacz:2}  \'{A}. Cs\'{a}sz\'{a}r, M. Laczkovich, {\it Some remarks on discrete Baire classes}, Acta Math. Acad. Sci. Hungar. {\bf 33} (1979), 51--70.

\bibitem{ChRos} F. Chaatit, H. Rosenthal, {\it On differences of semi-continuous functions}, Quaest. Math. {\bf 23} (3) (2000) 295--311.

\bibitem{HOR} R. Haydon, E. Odell, H. Rosenthal, {\it On certain classes of Baire-1 functions with applications to Banach space theory}, in: Functional Analysis, Austin, TX, 1987/1989, in: Lecture Notes in Math., vol. 1470, Springer, Berlin, 1991, pp. 1--35.

\bibitem{Karlova:Mykhaylyuk:Comp} O. Karlova, V. Mykhaylyuk, {\it On composition of Baire functions}, arxiv.org/abs/1511.08982.

\bibitem{Kuratowski:Top:1} K.~Kuratowski, {\it Topology. Volume 1}, Academic Press (1966).

\bibitem{KuSerp} C. Kuratowski and W. Sierpinski, {\it Les fonctions de classe  et les ensembles connexes ponctiformes}, Fund. Math. {3} (1922) 303--313.



\end{thebibliography}
\end{document}